\definecolor{codegreen}{rgb}{0,0.6,0}
\definecolor{codegray}{rgb}{0.5,0.5,0.5}
\definecolor{codepurple}{rgb}{0.58,0,0.82}
\definecolor{backcolour}{rgb}{0.95,0.95,0.92}
\lstdefinestyle{mystyle}{
    backgroundcolor=\color{backcolour},   
    commentstyle=\color{codegreen},
    keywordstyle=\color{violet},
    numberstyle=\tiny\color{codegray},
    stringstyle=\color{codepurple},
    basicstyle=\ttfamily\footnotesize,
    breakatwhitespace=false,         
    breaklines=true,                 
    captionpos=b,                    
    keepspaces=true,                 
    numbers=left,                    
    numbersep=5pt,                  
    showspaces=false,                
    showstringspaces=false,
    showtabs=false,                  
    tabsize=2
}
\setlist{nolistsep}
\newcommand{\M}{\mathcal{M}}
\newtheoremstyle{break}
    {\topsep}{\topsep}%
    {\upshape}{}%
    {\bfseries}{}%
    {\newline}{}%
\theoremstyle{break}
\theoremstyle{plain}
\newtheorem{theorem}{Theorem}[section]
\newtheorem{proposition}[theorem]{Proposition}
\newtheorem{lemma}[theorem]{Lemma}
\newtheorem{definition}[theorem]{Definition}
\newtheorem{example}[theorem]{Example}
\newtheorem*{thm:main}{Theorem \ref{thm:main}}
\newtheorem*{thm:simple-basis}{Theorem  \ref{prop:simple-basis}}
\newtheorem*{thm:cycle-basis-comp}{Theorem \ref{thm:cycle-basis-comp}}
\newtheorem*{thm:basis-extension}{Theorem \ref{thm:basis-extension}}
\newtheorem*{thm:p-cycle-space}{Theorem \ref{thm:p-cycle-space}}
\newtheorem*{thm:binary-interpolation}{Theorem \ref{thm:binary-interpolation}}
\newcommand{\BB}{\mathcal{B}}
\begin{document}

\title{Catalan Recursion on Externally Ordered Bases of \\Unit Interval Positroids}

\author{Jan Tracy Camacho}

\maketitle


\section{Introduction}

The Catalan numbers have a rich history. Its ubiquity in mathematics underscores the importance of this sequence. The Catalan sequence has appeared time and again in the works of numerous mathematicians. There are so many objects counted by the Catalan numbers, there's a whole book dedicated to exploring them \cite{stanCat}. Some of those combinatorial objects are triangulations of convex polygons with $n+2$ vertices (Fig. \ref{fig:triangulation}), binary trees with $n+2$ vertices, plane trees with $n+1$ vertices, ballot sequences of length $2n$, parenthetizations, and Dyck paths of length $2n$.

\begin{figure}[ht]
    \centering
    \includegraphics[width=6cm]{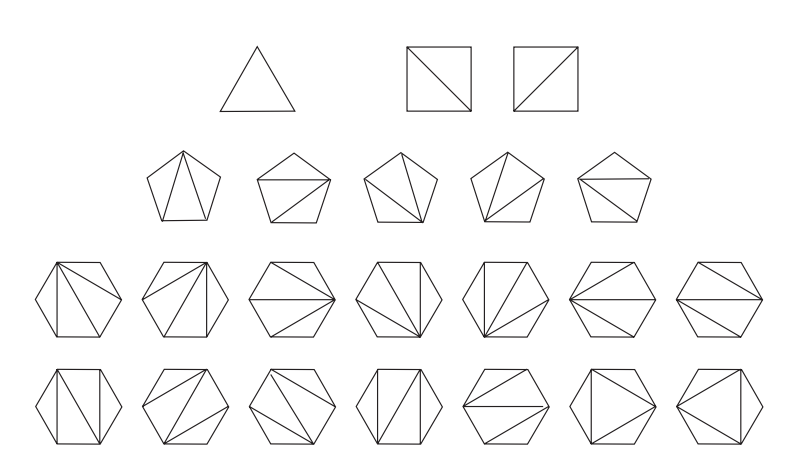}
    \caption{Triangulations of convex polygons with 3, 4, 5, and 6 vertices}
    \label{fig:triangulation}
\end{figure}

The modern way we describe Catalan numbers stems from Eug\`ene Catalan's interest in the triangulation of polygons problem \cite{stanCat}. Catalan numbers are defined as follows,

\begin{equation*} \label{CatChoose}
    C_n = \frac{1}{n+1} \binom{2n}{n}
\end{equation*}

We can also define Catalan numbers in terms of its generating function,
\begin{equation*}\label{CatGenFunc}
    C(x) = \sum_{n\geq 0} = 1+x+2x^2+5x^3+14x^4+\dots,
\end{equation*}
where the $n^{th}$ coefficient is the $n^{th}$ Catalan number. 

Euler and Goldbach studied the triangulation of convex polygons which gives us the following recursive definition,
\begin{equation*} \label{FundRec}
C_{n+1} = \sum_{k=0}^{n} C_n C_{n-k},\hspace{1em} C_0=1.
\end{equation*}

Many objects enumerated by the Catalan numbers exhibit the same recursive property of the numbers themselves. That is, given a Catalan object, a rule exists that describes how to produce an object from the objects that came before it.

We introduce a new Catalan object called the \emph{externally ordered poset of unit interval positroid} and explore its potential recursion. The elements of this poset are bases of a special matroid called a Unit Interval Positroid (UIP), a Catalan object introduced by Chavez--Gotti \cite{chavez}. The partial order $\leq_{\mbox{Ext}}$ is the external order of matroid bases as defined by Las Vergnas \cite{lvergnas}. We denote such a poset as $\mbox{Ex}(\mathcal{P})$, where $\mathcal{P}$ is a positroid. Our main result is the following theorem:
\begin{thm:main} Let $\mathcal{P}_n = ([2n],\BB)$ be the trivial UIP and $\mbox{Ex}(\mathcal{P}_n)$ be the poset of externally ordered bases of $\mathcal{P}_n$. Let $\gamma$ be the algorithm defined in Section \ref{algorithm}. Then $\gamma(\mbox{Ex}(\mathcal{P}_n))=\mbox{Ex}(\mathcal{P}_{n+1})$, where the partial order induced by $\gamma(\mbox{Ex}(\mathcal{P}_n))$ is the external order.
\end{thm:main}
Our main result shows recursion can be described nicely for the poset of the trivial UIP as well as enumerate the bases of the positroid. This is the first step towards describing the complete recursion for these special posets.

This paper is organized as follows. In section 2 we provide the background material necessary for this paper. In Section 3 we introduce an algorithm to generate the externally ordered bases of a rank $n+1$ trivial UIP from a rank $n$ trivial UIP. In section 4 we present our main theorem which proves the algorithm preserves ordering. In section 5 we discuss future work.

\section{Background} \label{background}

\subsection{Matroids}

Matroids capture the essence of dependence as we know it in linear algebra and graph theory. Though several definitions of a matroid exist, we utilize the basis definition in our work. 

\begin{definition}A \emph{matroid} $\M$ is a pair $(E,\BB)$ consisting of a finite set $E$ and a nonempty collection of subsets $\BB=\BB(\M)$ of $E$ that satisfy the following properties:
	\begin{enumerate}
		\item[(B1)] $\BB\neq\emptyset$
		\item[(B2)] (Basis exchange axiom) If $B_1,B_2\in\BB$ and $b_1\in B_1-B_2$, then there exists an element $b_2\in B_2-B_1$ such that $B_1-\{b_1\}\cup\{b_2\}\in\BB$.
	\end{enumerate} 
	The element $B\in \BB$ is called a \emph{basis} and $|B|$ is the rank of $\M$. A minimally dependent set $C$, that is $C-e$ is independent for any $e\in C$, is called a \emph{circuit}. The finite set $E$ is called the \emph{ground set.}
\end{definition}

\begin{example}\label{ex:p2_positroid}
Let $E=\{1,2,3,4\}$ be the columns of the following matrix,
\vspace{-1em}
\begin{center}
         \[A=
    \bordermatrix{ & \phantom{-}1 & 2 & \phantom{-}3 & \phantom{-}4 \cr
       & \phantom{-}1 & 0 & -1 & -1 & \cr
       & \phantom{-}0 & 1 & \phantom{-}1 & \phantom{-}1 }.
    \]
    \end{center}
Then the set of bases $\BB$ of the matroid over $E$ is the collection of all maximally independent sets of $E$. That is, $\BB = \{12,13,14,23,24\}$. Considering all the sets of minimally dependent sets of the columns of $A$, we get that the set of circuits of $\M$ are $\mathcal{C} = \{123,124,34\}$. One can easily see that $\BB$ satisfies the bases axioms above.
\end{example}

The matroid described in Example \ref{ex:p2_positroid} is part of a larger class of matroids called realizable. A matroid $\M$ whose bases are in bijection with the sets of maximally independent columns of some matrix $A$ over some field $\mathbb{F}$ is called \emph{realizable} and denoted $\M(A)$.

A matrix $A$ is called \emph{totally nonnegative} if all of its maximal minors are nonnegative. Thus, one can impose this restriciton to the family of realizable matroids and arrive at the following definition due to Postnikov \cite{pos}.

\begin{definition}
A \emph{positroid} of rank $r$ over $[n]$ is a realizable matroid such that the associated full rank $r\times n$ matrix $A$ is totally nonnegative. 
\end{definition}

The matroid $\M(A)$ given in example \ref{ex:p2_positroid} is a positroid of rank 2. In fact, $\M(A)$ is a unit interval positroid, which we discuss in the next section.

\subsection{Unit Interval Positroids}

Recall that a \emph{partially ordered} set, $P$ or \emph{poset}, is a set with a relation $\leq$ that satisfies reflexivity, antisymmetry, and transitivity \cite{stan}. We can represent the poset $(P,\leq)$ as a Hasse diagram that shows the elements of $P$ with the cover relations. See figure \ref{fig:UIO rank 6}.

\begin{definition}
A \emph{poset} $P$ is a \emph{unit interval order} if there exists a bijective map $i \mapsto [q_i,q_{i}+1]$ from $P$ to a set $S=\{ [q_i,q_{i}+1] | 1\leq i \leq n, q_i \in \mathbb{R} \} $ of closed unit intervals of the real line such that for $i,j\in P, i <_P j$ if and only if $q_i+1 < q_j$. We then say that $S$ is an \emph{interval representation} of $P$.
\end{definition}

\begin{figure}[h]
    \centering
    \includegraphics[width=0.75\linewidth]{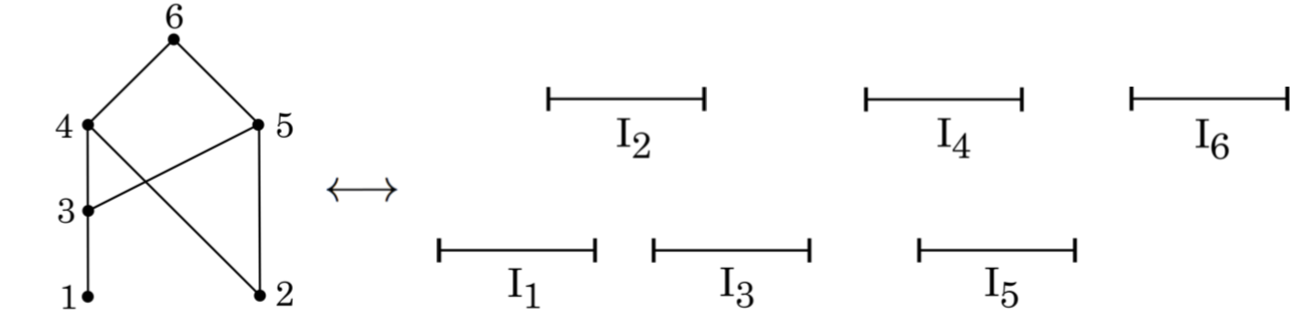}
    \caption{Hasse diagram of a unit interval order with 6 elements and its unit interval representation.}
    \label{fig:UIO rank 6}
\end{figure}

Associated to every unit interval order is an antiadjacency matrix, the key to describing unit interval positroids.

\begin{definition}
For an $n$-labeled poset $P$, the \emph{antiadjacency matrix} of $P$ is the $n \times n$ binary matrix $A=(a_{i,j})$ with $a_{i,j} =0$ if and only if $i <_P j$.
\end{definition}

Skandera--Reed \cite{skandera} proved that by labeling the unit interval order appropriately, every minor of the corresponding antiadjacency matrix $A$ is nonnegative. That is, the determinant of every submatrix of $A$ is 0 or a positive number. This fact, combined with the following result, leads us to our positroid of interest.

\begin{lemma}\label{lem:induced_positroid}(Postnikov, \cite{pos} Lem 3.9) 

For an $n \times n$ real matrix $A=(a_{i,j})$, consider the 
$n \times 2n$ matrix $B = \psi(A)$ where
\small{
\begin{equation*}
\arraycolsep=3pt
\left( \begin{array}{ccc}
a_{1,1}& \cdots & a_{1,n}\\
\vdots& \ddots& \vdots\\
a_{n-1,1}& \cdots& a_{n-1,n}\\
a_{n,1}& \cdots& a_{n,n}
\end{array} \right)
\overset{\psi}{\longmapsto}
\left( \begin{array}{cccccccc}
1&\cdots&0&0&(-1)^{n-1}a_{n,1}& \cdots &(-1)^{n-1}a_{n,n} \\
\vdots&\ddots&\vdots&\vdots& \vdots & \ddots & \vdots\\
0 & \cdots & 1 & 0  & -a_{2,1} &\cdots & -a_{2,n}\\ 
0&\cdots&0 &1 & \phantom{1}a_{1,1} & \cdots & \phantom{1}a_{1,n}
\end{array}\right).
\end{equation*}
For each pair $(I,J)$ with $I,J \subseteq [n]$ and $|I|=|J|$, define the set 
\begin{equation*}
    K = K(I,J) = \{n+1-k|k\in [n]\backslash I\}\cup \{n+j|j\in J\}.
\end{equation*}}
Then we have $\Delta_{I,J}(A)=\Delta_{[n],K}(B)$.
\end{lemma}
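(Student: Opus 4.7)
My plan is to reduce the $n \times n$ minor $\Delta_{[n],K}(B)$ to the $m \times m$ minor $\Delta_{I,J}(A)$ (where $m=|I|=|J|$) via a Laplace expansion along the columns of $B$ drawn from the left $n \times n$ identity block, and then to track how all the signs cancel.

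The first step is to unpack the block structure of $B=\psi(A)$: its left $n\times n$ block is the identity $I_n$, and its right block $F$ has entries $F_{i,j} = (-1)^{n-i}\,a_{n+1-i,j}$. Setting $I' \defeq \{n+1-k : k\in I\}$, the columns of $K$ drawn from $[n]$ are exactly $[n]\setminus I'$, while the columns of $K$ drawn from $[n+1,2n]$ correspond to $J$ shifted by $n$. Each chosen identity column $e_j$ has its unique $1$ in row $j\in[n]\setminus I'$, so the rows of $B[\,\cdot\,,K]$ indexed by $I'$ vanish in every left column of $K$.

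The second step is the Laplace expansion itself. After permuting rows to group $[n]\setminus I'$ first and $I'$ last, the submatrix becomes block triangular
\begin{equation*}
\begin{pmatrix} I_{n-m} & * \\ 0 & R \end{pmatrix}, \qquad R \defeq F[I',J],
\end{equation*}
so $\Delta_{[n],K}(B) = \varepsilon_1 \det R$, where $\varepsilon_1$ is the sign of the row permutation. Factoring the scalar $(-1)^{k-1}$ out of the row of $R$ indexed by $n+1-k$ (for each $k\in I$) and then reversing rows to match the increasing order of $I$ inside $A$ yields $\det R = \varepsilon_2\,\Delta_{I,J}(A)$ with $\varepsilon_2 = (-1)^{\binom{m}{2} + \sum_{k \in I}(k-1)}$.

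The remaining work, and the main obstacle, is the sign bookkeeping: showing that $\varepsilon_1 \varepsilon_2 = +1$ for every admissible pair $(I,J)$. Here $\varepsilon_1 = (-1)^{N(I')}$, where $N(I')$ counts pairs $(a,b)$ with $a\in I'$, $b\in[n]\setminus I'$, and $a>b$; a short calculation rewrites $N(I')$ in terms of $\sum_{k\in I}k$ and $\binom{m}{2}$. The sign convention $(-1)^{n-i}$ built into $\psi$ is precisely calibrated so that this combines with $\varepsilon_2$ to give $+1$, after which $\Delta_{I,J}(A) = \Delta_{[n],K}(B)$ follows immediately. As a sanity check, the identity can be verified by hand for small cases (e.g.\ $n=2$, any $I,J$), where each of the four minors of $A$ matches the corresponding minor of $B$ with no residual sign.
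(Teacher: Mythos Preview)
The paper does not actually prove this lemma: it is stated with attribution ``(Postnikov, \cite{pos} Lem 3.9)'' and used as a black box, with no argument supplied. So there is no in-paper proof to compare your attempt against.

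That said, your approach is the standard one and is essentially correct. The block-triangularization via the identity columns is exactly the right reduction, and your identification of $R=F[I',J]$ together with the row-reversal and row-scaling that extract $\Delta_{I,J}(A)$ is accurate. One small slip in the bookkeeping: the inversion count for the row shuffle should be
\[
N(I') \;=\; \bigl|\{(a,b): a\in I',\, b\in[n]\setminus I',\, a<b\}\bigr|
\;=\; \sum_{a\in I'}(n-a) - \binom{m}{2}
\;=\; \sum_{k\in I}(k-1) - \binom{m}{2},
\]
with $a<b$ rather than $a>b$ (the element of $I'$ must jump past the larger elements of its complement to reach the end). With this correction, $\varepsilon_1\varepsilon_2 = (-1)^{2\sum_{k\in I}(k-1)}=+1$ drops out immediately, confirming your claim. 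Your small-case sanity check would have caught the direction of the inequality had it been carried through, so the plan is sound.
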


Lemma \ref{lem:induced_positroid} gives us a way to encode the same information from the $n\times n$ antiadjacency matrix into a corresponding $n\times 2n$ matrix. Explicitly, Lemma \ref{lem:induced_positroid} associates the determinants of the submatrices of the antiadjacency matrix to the maximal minors of the $n\times 2n$ matrix. The antiadjacency matrix of a properly labeled unit interval order generates a totally nonnegative matrix, that is, a positroid. A positroid on $[2n]$ induced by a unit interval order is a \emph{unit interval positroid} or UIP.

Suppose we have a unit interval positroid $\mathcal{P}$ and we let $C$ be a circuit such that $C\in \mathcal{C(P)}$. If we take an element $e\in C$ from circuit $C\in \mathcal{C}(P)$ and $e$ is the smallest, then $C - \{e\}$ is a \emph{broken circuit}. Bases that do not contain broken circuits are called \emph{atoms}. However, if $B$ is a broken circuit, then there is an element $e$ such that $B \cup \{e\}$ is a circuit and $e$ is the smallest. This notion of broken circuits is captured by the ordering discussed in the next section.

\subsection{External Activities}
In \cite{lvergnas} Las Vergnas introduced the notion of active orders, a collection of related orders on bases of a matroid using broken circuits. We use only the external order to define our poset of interest, though one can derive the results for other orders from this one.

\begin{definition}
Let $M$ be a matroid on a linearly ordered set $E$, and let $A\subseteq E$. 
We say an element $e\in E$ is \emph{M-active} with respect to $A$ if there is a circuit $C$ of $M$ such that $e\in C \subseteq A \cup \{e\}$ and $e$ is the smallest element of $C$. We denote by $\mbox{Act}_{M}(A)$ the set of \emph{M-active} elements with respect to $A$.
\end{definition}

To determine the active elements for our positroids we look at those subsets $A\subseteq E$ where $A$ is a basis. Then those elements $e$ are exactly the elements which make or break a circuit.

\begin{definition}
The \emph{external set} of an element $A$ is obtained by setting $\mbox{Ext}_{M}(A)=\mbox{Act}_{M}(A)\backslash A$.
\end{definition}

The following proposition defines the external order of a set of bases of a matroid.

\begin{proposition}\label{lem:lasvergnas_def} (\cite{lvergnas}, Proposition 3.1)
Let $A$, $B$ be two bases of an ordered matroid $M$. The following properties are equivalent:
\begin{enumerate}
    \item $A \leq_{\mbox{Ext}} B$;
    \item $A \subseteq B \cup \mbox{Ext}_{M}(B)$;
    \item $A \cup \mbox{Ext}_{M}(A) \subseteq B \cup \mbox{Ext}_{M}(B)$
    \item B is the greatest, for the lexicographic ordering, of all bases of $M$ contained in $A \cup B$.
\end{enumerate}

\end{proposition}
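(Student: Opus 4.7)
The plan is to establish the four conditions are equivalent via the chain (1) $\Leftrightarrow$ (2), (2) $\Leftrightarrow$ (3), and (2) $\Leftrightarrow$ (4), anchored on whichever of these conditions Las Vergnas takes as the primitive definition of $\leq_{\mbox{Ext}}$. Since $\leq_{\mbox{Ext}}$ is introduced essentially through this proposition, condition (1) serves as a notational placeholder, and the real content lives in proving the other three conditions equivalent.

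The direction (3) $\Rightarrow$ (2) is immediate from $A \subseteq A \cup \mbox{Ext}_{M}(A) \subseteq B \cup \mbox{Ext}_{M}(B)$. The converse (2) $\Rightarrow$ (3) is the circuit-theoretic heart of the argument. Suppose $A \subseteq B \cup \mbox{Ext}_{M}(B)$ and fix $e \in \mbox{Ext}_{M}(A)$ with witnessing circuit $C \subseteq A \cup \{e\}$ in which $e$ is the minimum. If $e \in B$ there is nothing to show, so assume $e \notin B$. Every $f \in C \setminus (B \cup \{e\})$ lies in $A \setminus B \subseteq \mbox{Ext}_{M}(B)$, hence admits a circuit $C_{f} \subseteq B \cup \{f\}$ of which $f$ is the minimum; in particular $f > e$ and every element of $C_{f}$ is at least $f$. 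I would apply strong circuit elimination to $C$ and $C_{f}$ (eliminating $f$ while retaining $e$, which lies in $C \setminus C_{f}$) to produce a circuit $C' \subseteq (C \cup C_{f}) \setminus \{f\}$ containing $e$; every element of $C'$ is then at least $e$, so $e$ remains the minimum. Iterating on $|C \setminus (B \cup \{e\})|$ yields a circuit inside $B \cup \{e\}$ with $e$ as its smallest element, proving $e \in \mbox{Ext}_{M}(B)$.

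For (2) $\Leftrightarrow$ (4) I would invoke the matroid greedy/optimality theorem. A basis $B$ is lexicographically greatest among bases of $M$ contained in a fixed set $S \supseteq B$ iff no single exchange $(B \setminus \{b\}) \cup \{a\}$ with $a \in S \setminus B$ and $a > b$ yields a basis. Inspecting the fundamental circuit $C(a,B) \subseteq B \cup \{a\}$, this local optimality is precisely the condition that $a$ is the minimum element of $C(a,B)$, equivalently $a \in \mbox{Ext}_{M}(B)$. Taking $S = A \cup B$ converts (4) into $A \setminus B \subseteq \mbox{Ext}_{M}(B)$, which is (2).

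The main obstacle is the iterated circuit elimination in (2) $\Rightarrow$ (3). The delicate part of the bookkeeping is tracking that $e$ stays in the circuit and remains its minimum after each elimination, while also ensuring the process terminates before the circuit wanders outside $A \cup B \cup \{e\}$. Once this is established, the remaining equivalences fall out from standard matroid optimality.
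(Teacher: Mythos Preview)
The paper does not supply a proof of this proposition: it is quoted verbatim from Las Vergnas \cite{lvergnas} and used purely as a black box (only condition~(2) is ever invoked, in Example~\ref{Ex: calculating external} and in the proof of Theorem~\ref{thm:main}). So there is no ``paper's own proof'' to compare against.

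That said, your sketch is a correct reconstruction of the standard argument, and the organization you chose---treating (1) as definitional, proving (2)$\Leftrightarrow$(3) by iterated strong circuit elimination, and (2)$\Leftrightarrow$(4) via the greedy characterization of the lex-maximal basis---is essentially how Las Vergnas proceeds. Two small points worth tightening if you write it out in full: in the induction for (2)$\Rightarrow$(3), you should note explicitly that $C'\setminus(B\cup\{e\})\subseteq (C\setminus(B\cup\{e\}))\setminus\{f\}$, which both gives strict decrease of the induction parameter and guarantees that every remaining ``bad'' element still lies in $A\setminus B\subseteq\mbox{Ext}_M(B)$, so the next elimination step is available. For (2)$\Leftrightarrow$(4), the statement that ``lex-greatest $\Leftrightarrow$ no improving single exchange'' deserves a one-line justification (compare $B$ with the greedy output on $A\cup B$ at the largest element where they differ); otherwise the reduction of (4) to $A\setminus B\subseteq\mbox{Ext}_M(B)$ via fundamental circuits is exactly right.
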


Let $\mathcal{P}$ be a unit interval positroid generated by the poset where all elements are incomparable. Then we call $\mathcal{P}$ \emph{trivial}. We utilize Proposition \ref{lem:lasvergnas_def}(2) on the trivial UIP in order to determine the externally ordered poset of its bases. For an example of generating the matrix representing the trivial UIP of rank 3, see Figure \ref{fig:trivialUIP3}.

\begin{figure}
    \centering
    \includegraphics[width=0.85\linewidth]{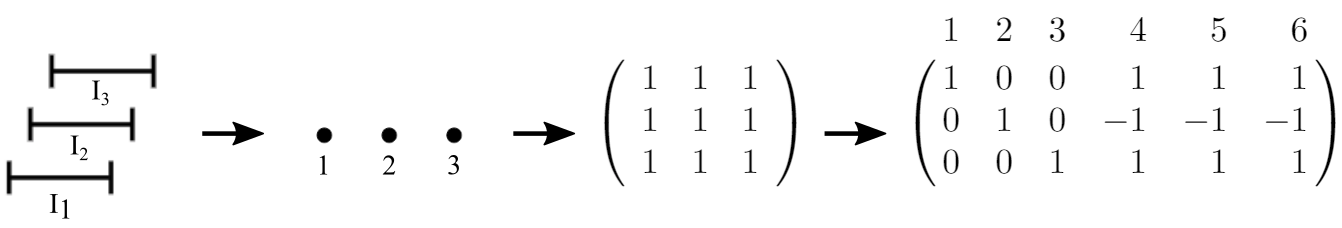}
    \caption{A trivial UIP of rank 3}
    \label{fig:trivialUIP3}
\end{figure}
\pagebreak
\begin{example}\label{Ex: calculating external}
Consider the trivial UIP of rank 3, $\mathcal{P}_3$. We will determine the external ordering on its bases. Using proposition \ref{lem:lasvergnas_def}, we first determine the active elements for every basis $B\in\BB$, that is $\mbox{Act}_{\mathcal{P}_3}(\BB)$. The bases and circuits of $\mathcal{P}_3$ are described as follows,

\begin{align*}
\mathcal{C}_{\mathcal{P}_{3}}  = \{&(1,2,3,4),(1,2,3,5),(1,2,3,6),(4,5),(4,6),(5,6)\},\\
\BB_{\mathcal{P}_{3}}  = \{&(1,2,3),(1,2,4),(1,2,5),(1,2,6),(1,3,4),(1,3,5),(1,3,6),\\   
    & (2,3,4),(2,3,5),(2,3,6)\}.
\end{align*}

The following tables show necessary calculations.
\begin{center}
\begin{multicols}{2}
\begin{tabular}{ |p{0.5cm}|p{1.7cm}|p{2.3cm}|  }
\hline
\multicolumn{3}{|c|}{Calculating M-Active Elements} \\
\hline
$e$& $c\in\mathcal{C_{P}}$&$A\cup \{e\}$ \\
\hline
1 & $(1,2,3,4)$ & $(2,3,4) \cup \{1\}$ \\
 & $(1,2,3,5)$ & $(2,3,5) \cup \{1\}$ \\
 & $(1,2,3,6)$& $(2,3,6) \cup \{1\}$ \\
2 & none  & \\
3 & none &  \\
4 & $(4,5)$ & $(1,2,5)\cup \{4\}$ \\
 & &$(1,3,5)\cup \{4\}$ \\
 & &$(2,3,5)\cup \{4\}$ \\
 & $(4,6)$ &$(1,2,6)\cup \{4\}$ \\
 & &$(1,3,6)\cup \{4\}$ \\
 & &$(2,3,6)\cup \{4\}$ \\
5 & $(5,6)$& $(1,2,6)\cup \{5\}$  \\
 & & $(1,3,6)\cup \{5\}$  \\
 & & $(2,3,6)\cup \{5\}$  \\
6 & none & \\
\hline
\end{tabular}

\begin{tabular}{ |p{1.4cm}|p{1.5cm}|p{1.5cm}|  }
\hline
\multicolumn{3}{|c|}{Calculating the External Set} \\
\hline
$A$& $\mbox{Act}_{M}(A)$&$\mbox{Ext}_{M}(A)$ \\
\hline
$(1,2,3)$ & none & $\emptyset$ \\
$(1,2,4)$ & none & $\emptyset$\\
$(1,2,5)$ & 4   & 4 \\
$(1,2,6)$ & 4,5 & 4,5 \\
$(1,3,4)$ & none & $\emptyset$ \\
$(1,3,5)$ & 4 & 4 \\
$(1,3,6)$ & 4,5 & 4,5\\
$(2,3,4)$ & 1   & 1 \\
$(2,3,5)$ & 1,4 & 1,4 \\
$(2,3,6)$ & 1,4,5 & 1,4,5 \\
\hline
\end{tabular}
\end{multicols}
\end{center}

To form the externally ordered poset, we use Proposition \ref{lem:lasvergnas_def}(2) in order to obtain the relations described by \ref{lem:lasvergnas_def}(1). For example, for bases $A=(1,2,3)$ and $B=(2,3,4)$, we check if the containment $A \subseteq B \cup \mbox{Ext}_{M}(B)$ is satisfied. We can observe that $(1,2,3)\subseteq (2,3,4) \cup \{1\}$, and thus $A \leq_{\mbox{Ext}} B$. Continue this calculation for all pairs of bases until all bases are ordered. The resulting object, which is a set of bases ordered by the relation $\leq_{\mbox{Ext}}$, is the externally ordered poset of the bases of $\mathcal{P}_3$.
\end{example}
Recall, we denote the poset of the trivial UIP bases with the external order as $\mbox{Ex}(\mathcal{P}_n)$. Continuing with Example \ref{Ex: calculating external}, we will show how to compute $\mbox{Ex}(\mathcal{P}_2)$. We compare the bases of $\mathcal{P}_2$ pairwise using Proposition \ref{lem:lasvergnas_def}(2). That is, for every $A, B\in \BB_{\mathcal{P}_2}$, check if $A \subseteq B \cup \mbox{Ext}_{M}(B)$ is satisfied. Then we can draw the following poset (figure on the right) given the information from the table of $\mbox{Ex}(\mathcal{P}_2)$. Using the information in the table below, and the condition just mentioned, we produce the Hasse diagram for $\mbox{Ex}(\mathcal{P}_2)$.

\begin{figure}[h]
    \centering
    \includegraphics[width=0.50\linewidth]{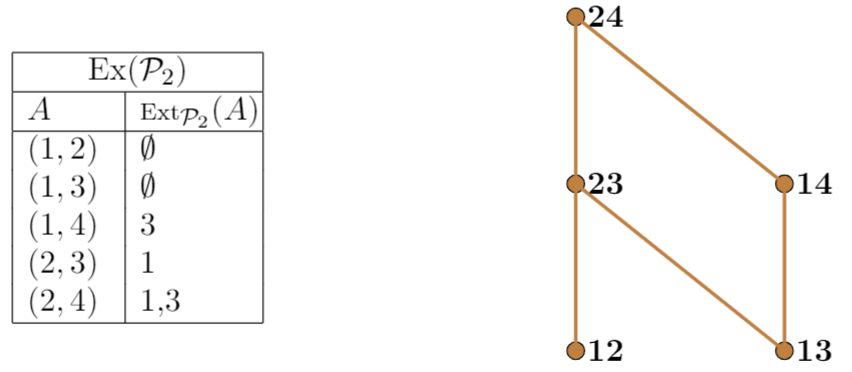}
    \caption{$\mbox{Ex}(\mathcal{P}_2)$ and Hasse diagram.}
    \label{fig:p2 and poset}
\end{figure}

Comparing every pair of bases to find the ordering is easy whenever the rank of the positroid is low. Quickly, this becomes a difficult task for higher ranks. We now introduce our algorithm which streamlines this process.

\section{Algorithm}\label{algorithm}

To prove $\mbox{Ex}(\mathcal{P})$ exhibits Catalan recursion we first describe an algorithm that generates a set of elements from the set of bases of $\mathcal{P}_n$ and defines an order induced by $\mbox{Ex}(\mathcal{P}_n)$. In the next section we prove this set is precisely the set of bases of $\mathcal{P}_{n+1}$ and the induced order preserves the external order of the bases of $\mathcal{P}_{n+1}$. 

Let $\BB_{\mathcal{P}_n}$ be the bases of the trivial UIP $\mathcal{P}_n$. Then we produce a set of elements in the following way. 

\begin{enumerate}
\item \textcolor{blue}{``Reinforce"}: for $B\in \BB_{\mathcal{P}_n}$, add 1 to all elements of $B$ and adjoin $\min{B}$. The original order is kept.
\item \textcolor{red}{``Build Up"}: if $2n+1\in B$ from step 1, then form $B\backslash\{2n+1\}\cup\{2(n+1)\}$ and it covers $B$.
\item \textcolor{olive}{``Grow Spine"}: if $2\in B$ from steps 1 and 2, then form $B\backslash\{2\}\cup\{1\}$ and is covered by $B$.

\end{enumerate}

This completes the procedure for the algorithm. Figure \ref{fig:P2_to_p3} illustrates how the algorithm is applied to the external poset of the trivial UIP of rank 2. 

\begin{figure}[h]
    \centering
    \includegraphics[width=1\linewidth]{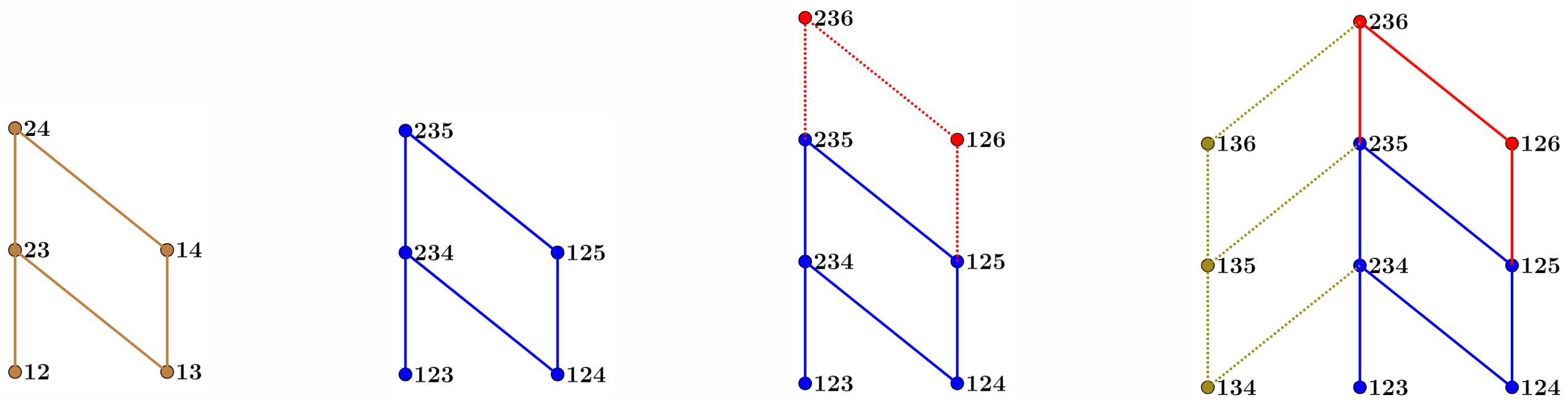}
    \caption{The algorithm applied to the trivial UIP $\mathcal{P}_2$ and the  resulting ordered set.}
    \label{fig:P2_to_p3}
\end{figure}

\section{Main Theorem}\label{}

Denote the algorithm above by $\gamma$; that is, $\gamma(\BB_{\mathcal{P}_n})$ is the ordered set of elements produced by $\gamma$. To prove our main result, we show $\gamma(\BB_{\mathcal{P}_n})$ is precisely the externally ordered poset of the bases of the rank $n+1$ trivial UIP. That is, we show $\gamma$ produces the set of bases expected and the bases are externally ordered. 
We begin by proving the bases and circuits of the trivial UIP have a nice description.

\begin{lemma}\label{lem:bases_description} 

Let $\mathcal{P}$ be the trivial unit interval positroid of rank $n$ and $B\in \BB$ a basis of $\mathcal{P}$. Then $B$ satisfies one of the following

\begin{enumerate}
    \item $B = (1,\hdots,n)$,
    \item $B = (1,\hdots,\hat{j},\hdots,n)\bigcup \{j\}$ for $j\in [n+1,\hdots,2n]$,
\end{enumerate}
where $\hat{j}$ indicates $j$ is not included in the basis $B$. Moreover, $|\BB| = n^2+1.$
\end{lemma}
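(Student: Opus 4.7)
The plan is to compute explicitly the matrix $\psi(A)$ realizing the trivial UIP $\mathcal{P}_n$ via Lemma \ref{lem:induced_positroid}, and then to read the bases directly off of the column structure of that matrix.

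Since $\mathcal{P}_n$ comes from the $n$-element poset in which no two elements are comparable, no relation $i <_P j$ ever holds, so the antiadjacency matrix $A$ is the $n \times n$ all-ones matrix. Substituting this $A$ into the formula defining $\psi$ in Lemma \ref{lem:induced_positroid}, I will verify that $\psi(A)$ has the identity $I_n$ in its first $n$ columns and that each of its last $n$ columns equals the single vector $v = ((-1)^{n-1}, (-1)^{n-2}, \ldots, -1, 1)^T \in \mathbb{R}^n$. This is consistent with Example \ref{ex:p2_positroid}, where for $n=2$ one has $v = (-1,1)^T$. Two features of this matrix drive the rest of the argument: every coordinate of $v$ is nonzero, and columns $n+1, \ldots, 2n$ of $\psi(A)$ are pairwise identical.

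From this structure the classification is immediate. A size-$n$ subset $B \subseteq [2n]$ is a basis iff its columns are linearly independent in $\mathbb{R}^n$. If $|B \cap \{n+1,\ldots,2n\}| \geq 2$, then $B$ contains two equal columns and is dependent. So either $B \subseteq \{1,\ldots,n\}$, which forces $B = \{1,\ldots,n\}$ (case 1, independent via the identity block), or $B = (\{1,\ldots,n\}\setminus\{i\})\cup\{j\}$ for some $i \in [n]$ and $j \in \{n+1,\ldots,2n\}$ (case 2). In case 2 the chosen $n-1$ standard basis vectors span the coordinate hyperplane $\{x_i = 0\}$, and since the $i$-th entry of $v$ is nonzero, $v$ does not lie in this hyperplane; hence the columns of $B$ are independent and $B$ is a basis. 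Counting the two cases gives $1 + n \cdot n = n^2 + 1$, as claimed.

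The only step with real content is the explicit verification of the form of $\psi(A)$, which is a direct substitution into Lemma \ref{lem:induced_positroid}; once that is in hand, the proof reduces to the two linear-algebra observations that $v$ has no zero coordinates and that columns $n+1, \ldots, 2n$ coincide. I do not expect a genuine obstacle.
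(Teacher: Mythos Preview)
Your approach is essentially identical to the paper's: compute the matrix $\psi(A)$ for the all-ones antiadjacency matrix, observe it is $[I_n \mid v \; v \; \cdots \; v]$, and read off the bases from the column structure. The one small addition you make is the explicit reason why every set of type (2) is actually independent (the $i$-th coordinate of $v$ is nonzero, so $v \notin \{x_i = 0\}$); the paper's proof asserts this implicitly, so your version is marginally more complete but not genuinely different.
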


\begin{proof}
By Lemma \ref{lem:induced_positroid}, we know that $P$ is represented by the $n \times 2n$ matrix,

\[\begin{bmatrix}
1&0&\cdots &0&(-1)^{n-1}& \cdots &(-1)^{n-1} \\
0& 1 &\cdots &0 & \vdots & \ddots & \vdots\\
\vdots & \ddots & 1 & 0  & (-1) &\cdots & (-1)\\
0&\cdots&0 &1 & 1 & \cdots & 1\\
\end{bmatrix},\]\\
where the first $n$ columns form the $n\times n$ identity matrix and the last $[n+1,\hdots, 2n]$ columns are equal. 

Since a basis corresponds to a non-zero $n\times n$ maximal minor, we need only describe which submatrices are non-singular. The $n\times n$ identity matrix is non-singular, which is the first basis description.

Since the $[n+1,\hdots,2n]$ columns are all the same, any non-singular maximal submatrix can include at most one of these columns. Thus, all other bases are formed by maximal submatrices with one column from $[n+1,\hdots,2n]$ and the remaining a subset of the columns $[n]$, which is the second description.

We use our description to enumerate the bases. To form a basis of the second kind, we remove an element from $\{1,2,\dots,n\}$ and replace it with any of the $n$ elements from $[n+1,\dots,2n]$. This is done in $\binom{n}{1}n$ ways. Including the basis $\{1,2,\dots,n\}$, we get $$\binom{n}{1} n + 1 =n^2 + 1$$ bases, as desired.
\end{proof}

\begin{lemma}\label{circuit_description}
Let $\mathcal{P}_n$ be the trivial UIP and $\mathcal{C}_{\mathcal{P}_n}$ its set of circuits. Then any $C\in \mathcal{C}_{\mathcal{P}_n}$ must either be $I\cup i$ where $i\in [n+1,2n]$ or $(i,j)$ for $i,j\in[n+1,2n]$ and $i\neq j$.
\end{lemma}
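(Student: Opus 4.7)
The plan is to read off the circuit structure directly from the explicit matrix representation used in the proof of Lemma \ref{lem:bases_description}. In that matrix the first $n$ columns form the identity (they are linearly independent) while the last $n$ columns are all equal to the single vector $v = ((-1)^{n-1}, (-1)^{n-2}, \ldots, -1, 1)^T$, whose entries are all nonzero. Since a circuit is a minimal linearly dependent set of columns, and $\{e_1, \ldots, e_n\}$ is independent, every circuit $C$ must contain at least one element of $[n+1, 2n]$.

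I would then split into two cases. First, if $C$ contains at least two indices $i, j \in [n+1, 2n]$, then columns $i$ and $j$ are the \emph{same} vector $v$, so $\{i,j\}$ is already dependent; minimality of $C$ forces $C = \{i,j\}$, giving the second form. Second, if $C$ contains exactly one index $i \in [n+1, 2n]$, then $C = I \cup \{i\}$ for some $I \subseteq [n]$, and dependence means $v$ lies in $\mathrm{span}\{e_k : k \in I\}$. Since every coordinate of $v$ is nonzero and the $e_k$-coordinate of any vector in $\mathrm{span}\{e_k : k \in I\}$ vanishes for $k \notin I$, we must have $I = [n]$. So $C = [n] \cup \{i\}$, matching the first form, and minimality is automatic because removing any $e_k$ from $[n] \cup \{i\}$ leaves an independent set (the resulting collection still spans $\R^n$ with $n$ vectors, since the $k$th coordinate of $v$ is nonzero).

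The whole argument is elementary linear algebra once the matrix from Lemma \ref{lem:bases_description} is in hand; I don't foresee a genuine obstacle. The only subtle point is in Case B, where one must explicitly use that every entry of $v$ is $\pm 1$ (and in particular nonzero) to rule out circuits of the form $I \cup \{i\}$ with $I \subsetneq [n]$. If the statement of the lemma is meant to assert that $I$ in the first form is necessarily all of $[n]$ (as in the $\mathcal{P}_3$ example preceding it), this is precisely where that rigidity comes from.
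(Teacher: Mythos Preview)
Your argument is correct and rests on the same structural fact the paper uses: in the representing matrix the first $n$ columns are the standard basis and the last $n$ columns are all equal to a single vector with nonzero entries. The paper's proof is organized slightly differently---it first checks that each set $[n]\cup\{i\}$ and each pair $\{i,j\}$ with $i,j\in[n+1,2n]$ is a circuit (by appealing to the basis description of Lemma~\ref{lem:bases_description}) and then observes that every dependent set must contain one of these, so no further circuits exist. You instead start from an arbitrary circuit and case-split on how many indices it has in $[n+1,2n]$, reading off the form directly from the linear algebra. These are two standard ways to prove the same classification; your version is a bit more explicit (in particular, you spell out why $I$ must equal all of $[n]$ using the nonvanishing of the entries of $v$, a point the paper leaves implicit in the symbol $I$), while the paper's version leverages the already-proved basis lemma to avoid redoing that linear algebra.
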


\begin{proof}
Note that $I\cup i$ where $i\in [n+1,2n]$ is a circuit since the removal of any element produces a basis as described in Lemma \ref{lem:bases_description}. Similarly, the pair $i,j\in[n+1,2n]$ and $i\neq j$ is a circuit since every singleton $i\in[2n]$ is independent. Since any other subset of $[2n]$ contains one of these sets, there are no other circuits of $\mathcal{P}_n$.
\end{proof}

We state the following observation about bases of trivial UIPs, though it is not used in any subsequent theorem. 
\begin{lemma}\label{min_elts_description}
Let $B\in \mathcal{B}(\mathcal{P}_n)$, where $\mathcal{P}_{n}$ is a trivial unit interval positroid. Then $\epsilon(B)=0$ if and only if $B$ is a minimally ordered basis.
\end{lemma}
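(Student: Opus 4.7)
The plan is to unpack the notation first: $\epsilon(B)$ denotes the external activity $|\mbox{Ext}_{\mathcal{P}_n}(B)|$, and a \emph{minimally ordered basis} is a basis that is minimal in the external order $\leq_{\mbox{Ext}}$ on $\BB(\mathcal{P}_n)$. Once phrased this way, the equivalence in fact holds for any matroid, and is essentially immediate from characterization (2) of Proposition \ref{lem:lasvergnas_def}. So my strategy is to give a short, general argument and then note in passing that Lemmas \ref{lem:bases_description} and \ref{circuit_description} let us list the minimally ordered bases explicitly.

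For the forward direction, assume $\epsilon(B)=0$, i.e.\ $\mbox{Ext}_{\mathcal{P}_n}(B)=\emptyset$. If $A\in\BB$ satisfies $A\leq_{\mbox{Ext}} B$, then by Proposition \ref{lem:lasvergnas_def}(2) we have $A\subseteq B\cup \mbox{Ext}_{\mathcal{P}_n}(B)=B$, and since $|A|=|B|$ this forces $A=B$. Thus nothing lies strictly below $B$, so $B$ is minimal.

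For the converse I would argue the contrapositive. Suppose $\mbox{Ext}_{\mathcal{P}_n}(B)\neq\emptyset$ and pick $e\in\mbox{Ext}_{\mathcal{P}_n}(B)$. By definition of activity, there is a circuit $C$ with $e=\min C$ and $C\subseteq B\cup\{e\}$; since $B$ is a basis and $e\notin B$, this $C$ is in fact the unique (fundamental) circuit of $B\cup\{e\}$. Choose any $f\in C\setminus\{e\}$ and set $A\defeq (B\cup\{e\})\setminus\{f\}$. Removing an element of the unique circuit leaves an independent set of size $|B|$, so $A$ is a basis. Moreover $A\subseteq B\cup\{e\}\subseteq B\cup\mbox{Ext}_{\mathcal{P}_n}(B)$, so $A\leq_{\mbox{Ext}} B$ by Proposition \ref{lem:lasvergnas_def}(2); and $e\in A\setminus B$ gives $A\neq B$, hence $A<_{\mbox{Ext}} B$ by antisymmetry, contradicting minimality of $B$.

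The only mildly delicate point is justifying that $(B\cup\{e\})\setminus\{f\}$ is a basis, but this is a standard consequence of the uniqueness of the fundamental circuit of $e$ relative to $B$. Everything else is a direct application of the definition of $\mbox{Ext}$ together with Proposition \ref{lem:lasvergnas_def}(2). If one wants a more computational proof tailored to the trivial UIP, one can combine the analysis with Lemmas \ref{lem:bases_description} and \ref{circuit_description} to obtain the explicit list of minimally ordered bases, namely $[n]$ together with $[n]\setminus\{j\}\cup\{n+1\}$ for $j\in\{2,\dots,n\}$ ($n$ bases in total), which matches the three minimal elements $(1,2,3),(1,2,4),(1,3,4)$ visible in Example \ref{Ex: calculating external}.
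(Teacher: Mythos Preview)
Your proof is correct and in fact cleaner than the paper's. Both directions agree on the forward implication: if $\mbox{Ext}(B)=\emptyset$ then $A\leq_{\mbox{Ext}}B$ forces $A\subseteq B$, hence $A=B$. For the converse, however, the paper works entirely inside the trivial UIP: it first argues that a minimal $B$ must contain $1$, then invokes Lemma~\ref{circuit_description} to pin down the relevant circuit as a two-element circuit $\{e,j\}$ with $j\in B$, and swaps $j$ for $e$ to produce a strictly smaller basis. Your argument replaces this case analysis with the general fundamental-circuit exchange: from $e\in\mbox{Ext}(B)$ you take the unique circuit in $B\cup\{e\}$, swap out any $f\in C\setminus\{e\}$, and land on a basis $A\subseteq B\cup\mbox{Ext}(B)$ with $A\neq B$. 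This works for an arbitrary ordered matroid, not just $\mathcal{P}_n$, so you gain generality and avoid the detour through Lemma~\ref{circuit_description}; the paper's version, on the other hand, makes the structure of the trivial UIP more visible and essentially produces the explicit list of minimal bases along the way (which you recover separately in your final remark). One tiny quibble: the conclusion $A<_{\mbox{Ext}}B$ follows directly from $A\leq_{\mbox{Ext}}B$ and $A\neq B$; invoking ``antisymmetry'' there is not quite the right word, though the logic is fine.
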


\begin{proof}
$(\Rightarrow)$ Assume $\epsilon(B)=0$ (i.e. Ext$(B)=\emptyset$) and there exists $A\in \mathcal{B}$ such that $A\lessdot B$. Then $A\subset B\cup \text{Ext}(B) = B$, which implies $A=B$.

$(\Leftarrow)$ Assume $B$ is minimal and $\text{Ext}(B)\neq\emptyset.$ First, note $B$ minimal implies $1\in B$. If not, let $B'=B\backslash2\cup 1$. Then Ext$(B')=$ Ext$(B)\backslash\{1\}$, so that $B'\subset B \cup $Ext$(B)$, implying $B'\lessdot B$, a contradiction. Since $\text{Ext}(B)\neq\emptyset,$ there exists $e=\min(C)$ for some circuit $C$ of $P_n$ such that $C\subset B\cup\{e\}$. By Lemma \ref{circuit_description}, $C=ej$ for some $j\in[n+1,2n]$ and $j\neq e$. Then Ext$(B)=\{n+1,\dots,e\}$, and $j\in B$ necessarily. Let $A=B\backslash \{j\} \cup \{e\}$. Then $A\lessdot B$, a contradiction.
\end{proof}

Next we prove $\gamma$ produces the set of desired bases.

\begin{theorem}\label{thm:AlgMakesBases}
Let $\BB$ be the set of bases of the trivial unit interval positroid $\mathcal{P}_{n}$ of rank $n$ over ground set $[2n]$. Then, after applying algorithm $\gamma$ to $\BB$, $\gamma(\BB)$ is the set of bases for the trivial unit interval positroid $\mathcal{P}_{n+1}$ of rank $n+1$ over ground set $[2(n+1)]$.
\end{theorem}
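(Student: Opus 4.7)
The plan is to invoke Lemma \ref{lem:bases_description} to parametrize $\BB_{\mathcal{P}_n}$ and $\BB_{\mathcal{P}_{n+1}}$ explicitly, then track the action of each step of $\gamma$ on each basis type, showing that the three output families assemble into exactly $\BB_{\mathcal{P}_{n+1}}$. By Lemma \ref{lem:bases_description}, every basis of $\mathcal{P}_{n+1}$ is either the identity basis $\{1,2,\dots,n+1\}$ or has the form $\{1,\dots,n+1\}\setminus\{k\}\cup\{j\}$ with $(k,j)\in[n+1]\times[n+2,2n+2]$, and I will identify each such basis with exactly one output of $\gamma$.

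First I would unpack Reinforce. A basis of $\mathcal{P}_n$ is either $\{1,\dots,n\}$ (which shifts to $\{2,\dots,n+1\}$, and adjoining $\min = 1$ gives the identity basis of $\mathcal{P}_{n+1}$) or $\{1,\dots,n\}\setminus\{k\}\cup\{j\}$ for $(k,j)\in[n]\times[n+1,2n]$. For $k\neq 1$ the minimum is $1$, and Reinforce outputs $\{1,\dots,n+1\}\setminus\{k+1\}\cup\{j+1\}$; for $k=1$ the minimum is $2$, and Reinforce outputs $\{2,3,\dots,n+1,j+1\}=\{1,\dots,n+1\}\setminus\{1\}\cup\{j+1\}$. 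Together, Reinforce produces the identity basis plus every $\{1,\dots,n+1\}\setminus\{k'\}\cup\{j'\}$ satisfying $k'\neq 2$ and $j'\neq 2n+2$.

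Next I would handle Build Up and Grow Spine. Build Up fires on Reinforce outputs containing $2n+1$, i.e.\ those with $j'=2n+1$ (and $k'\neq 2$), and swapping $2n+1$ for $2n+2$ produces exactly the bases $\{1,\dots,n+1\}\setminus\{k'\}\cup\{2n+2\}$ for $k'\neq 2$. Grow Spine applies to bases from steps 1--2 that contain $2$ but not $1$ (otherwise $B\setminus\{2\}\cup\{1\}$ would have size $n$ and fail to be a basis); these are precisely the $k=1$ Reinforce outputs together with the $k'=1$ Build Up output, namely $\{2,3,\dots,n+1,j'\}$ for $j'\in[n+2,2n+2]$, and after the swap they become $\{1,\dots,n+1\}\setminus\{2\}\cup\{j'\}$ for every such $j'$. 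The three output families are pairwise disjoint (distinguished by whether $k'=2$ and whether $j'=2n+2$), and together with the identity basis they cover every $(k',j')\in[n+1]\times[n+2,2n+2]$. A cardinality check ($n^2+1$ from Reinforce, $n$ from Build Up, $n+1$ from Grow Spine, summing to $(n+1)^2+1=|\BB_{\mathcal{P}_{n+1}}|$) confirms nothing is double-counted. I do not foresee a serious obstruction; once Lemma \ref{lem:bases_description} is available the work is careful bookkeeping, and the only subtle point is the case split $k=1$ versus $k>1$ in Reinforce, whose asymmetry in the adjoined minimum is precisely what Grow Spine later corrects.
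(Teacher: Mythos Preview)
Your proposal is correct and follows essentially the same strategy as the paper: invoke Lemma~\ref{lem:bases_description}, track each step of $\gamma$ on the two basis types, verify outputs are bases of $\mathcal{P}_{n+1}$, and finish with the cardinality count $n^2+1+n+(n+1)=(n+1)^2+1$. Your explicit $(k',j')$ parametrization makes the disjointness and coverage arguments cleaner than the paper's slightly ad hoc newness checks at each step, and your reading of Grow Spine as applying only when $2\in B$ and $1\notin B$ matches the paper's own proof, which restricts to $\min(\gamma(B))=2$.
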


\begin{proof}
Let $\mathcal{B}=\mathcal{B}(\mathcal{P}_{2n})$. We will show that for every $B\in\mathcal{B}$, $\gamma(\mathcal{B})$ produces a $B^{'}\in\mathcal{B}(\mathcal{P}_{2(n+1)})$.
 Let $B=\{1,\dots,n\}.$ Then $\gamma(B)=\{1,\dots,n+1\}$, which by Lemma \ref{lem:bases_description}(1) is a basis of $\mathcal{P}_{2(n+1)}$.
 Let $B=\{1,2,\dots,\hat{j},\dots,n\}$, where $\hat{j}\in [n+1,\dots, 2n]$. Applying step 1 of $\gamma$ we have that $\gamma(B) = B+1\cup\{min(B)\}\in\gamma(\BB).$ Explicitly, this means $$\{1,2,3,\dots, \hat{j}+1,\dots,n+1\}\in\gamma(\BB),$$ where $$\hat{j}+1 \in [n+2,2n+1].$$ Since $[n+2,\dots,2n+1]\subset [(n+1)+1,\dots,2(n+1)]$, then by Lemma \ref{lem:bases_description} we have that $\gamma(B)$ is a basis of $\mathcal{P}_{2(n+1)}$. Moreover, there are $n^2$ bases created at this step.
 
For the second step of $\gamma$, let $\BB_{max}$ be the set of bases such that max$(\gamma(B))=2n+1$. Then for all $B\in \BB_{max}$, we have $B\backslash\{2n+1\}\cup \{2(n+1)\}$. All sets generated prior to this step have maximum value $2n+1$. Thus, all sets generated at this step of $\gamma$ are new elements of $\gamma({\BB})$. Every new set generated is formed by choosing a value of $[n]$ and replacing it with $2(n+1)$. Thus each set is a basis and we have $n$ new bases.
 
For the final step of $\gamma$, let $\BB_{min}$ be the set of bases such that min$(\gamma(B))=2$. Then for every $B\in \BB_{min}$, $B\backslash\{2\}\cup \{1\}\in\gamma(\BB)$. Note Lemma \ref{lem:bases_description} implies $3\in B$ for every $B\in \BB_{min}$. Then for every set generated at this step, it must contain $1,3$ and not $2$. By construction, all sets generated by $\gamma$ prior to the third step of $\gamma$ have that their first two entries are consecutive. Thus, all sets generated in this third step are new and, by Lemma \ref{lem:bases_description}, a basis. Thus all sets are bases of $\mathcal{P}_{n+1}$.

To enumerate the bases generated in this final step of $\gamma$, let us analyze the number of bases it applies to. By the first step of the algorithm, every basis $B\in\mathcal{B}$ such that $\min(B)=2$ produces a basis in $\mathcal{B}(\mathcal{P}_{2(n+1)})$ with minimum element 2. Thus, the number of bases produced equals the number of $B\in\mathcal{B}$ such that $\min(B)=2$, which is $n$. By the second step of the algorithm, the only basis produced with minimum element 2 is the one that also has maximum element $2n$. Thus in total, there are $n+1$ bases with minimum element 2 produced by $\gamma$. Accounting for the all the bases produced at every step of $\gamma$, we have that $$|\gamma(\mathcal{B}(\mathcal{P}_{2(n+1)}))|=n^2+n+n+1+1 = (n+1)^2+1,$$ as desired.
\end{proof}

Using the above results, we may now prove our main theorem.

\begin{theorem}\label{thm:main}
Let $\mathcal{B}(\mathcal{P}_n)$ be the bases of the rank $n$ trivial UIP on $[2n]$. Assume $\gamma$ is defined to be the algorithm described in Section \ref{algorithm}. Then $\gamma(\mathcal{B}(\mathcal{P}_n))=\mbox{Ex}(\mathcal{P}_{n+1})$, the externally ordered poset of bases of the rank $n+1$ trivial UIP on $[2(n+1)]$.
\end{theorem}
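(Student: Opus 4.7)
The plan is to combine Theorem~\ref{thm:AlgMakesBases}, which already settles the set-theoretic equality $\gamma(\mathcal{B}(\mathcal{P}_n)) = \mathcal{B}(\mathcal{P}_{n+1})$, with a direct comparison of the two orders via Proposition~\ref{lem:lasvergnas_def}(2): $A \leq_{\mbox{Ext}} B$ iff $A \subseteq B \cup \mbox{Ext}_{M}(B)$. The whole task then reduces to computing the external sets in $\mathcal{P}_{n+1}$ explicitly and checking that the order induced by $\gamma$ is exactly $\leq_{\mbox{Ext}}$.

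I would begin by computing $\mbox{Ext}(B)$ for every basis $B$ of $\mathcal{P}_{n+1}$ using Lemma~\ref{circuit_description}. Parameterizing the non-identity bases as $(i, j) := [n+1] \setminus \{i\} \cup \{j\}$ with $i \in [n+1]$ and $j \in [n+2, 2n+2]$, the circuit description gives $\mbox{Ext}((i, j)) = \{n+2, \dots, j-1\}$, plus the extra element $1$ exactly when $i = 1$, and $\mbox{Ext}([n+1]) = \emptyset$. Feeding this into Proposition~\ref{lem:lasvergnas_def}(2) yields the compact criterion: $(i_1, j_1) \leq_{\mbox{Ext}} (i_2, j_2)$ iff $j_1 \leq j_2$ and either $i_1 = i_2$ or $i_2 = 1$; and $[n+1] \leq_{\mbox{Ext}} B$ iff $B = [n+1]$ or $B = (1, j)$. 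This is the target order I must match.

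Next I would match this criterion against each stage of $\gamma$. For step 1 (Reinforce), the shift-and-adjoin map $\phi$ sends $[n]$ to $[n+1]$ and an $\mathcal{P}_n$-basis $(i, j)$ to $(i+1, j+1)$ when $i \neq 1$ and to $(1, j+1)$ when $i = 1$; a direct check using the compact criterion shows that $A \leq_{\mbox{Ext}} B$ in $\mathcal{P}_n$ is equivalent to $\phi(A) \leq_{\mbox{Ext}} \phi(B)$ in $\mathcal{P}_{n+1}$, so step 1 faithfully transports the order. For step 2 (Build Up), the cover $B \lessdot B \setminus \{2n+1\} \cup \{2n+2\}$ is verified from $2n+1 \in \mbox{Ext}(B \setminus \{2n+1\} \cup \{2n+2\})$; since the swap preserves the index $i$, the new layer at $j = 2n+2$ is order-isomorphic to the $j = 2n+1$ layer and extends the associated chains by one step, exactly as the compact criterion requires. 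For step 3 (Grow Spine), the new bases are precisely those with $i = 2$, and the cover $(2, j) \lessdot (1, j)$ follows from $1 \in \mbox{Ext}((1, j))$; the chain $(2, n+2) \lessdot \cdots \lessdot (2, 2n+2)$ demanded by the compact criterion is inherited from the chain of $(1, \cdot)$ bases through the bijection $(1, j) \leftrightarrow (2, j)$ that Grow Spine defines.

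The hardest part will be making step 3 watertight: the algorithm articulates only a single cover per new basis, but the compact criterion demands a full parallel chain among the $i = 2$ bases together with cross-chain relations to the $(1, \cdot)$ spine. I would close this gap with a pair-by-pair verification, splitting on the type of each basis involved ($[n+1]$ itself, the $(1, \cdot)$ spine, the $(2, \cdot)$ spine, and each $(i, \cdot)$ chain for $i \in [3, n+1]$), checking that every relation produced by the transitive closure of the $\gamma$-covers matches the compact criterion and that no extraneous relations appear.
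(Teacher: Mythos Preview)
Your proposal is correct and follows essentially the same strategy as the paper: invoke Theorem~\ref{thm:AlgMakesBases} for the underlying set, compute $\mbox{Ext}(B)$ explicitly from the circuit description of Lemma~\ref{circuit_description}, and then verify Proposition~\ref{lem:lasvergnas_def}(2) step by step through $\gamma$. Your $(i,j)$ parameterization and the resulting compact criterion are a tidier packaging than the paper's case split on whether $1\in A$ and $1\in B$, and you are in fact more careful than the paper in insisting that both directions (no missing and no extraneous relations, including the full chain among the $i=2$ bases created in step~3) be checked.
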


\begin{proof}
By Lemma \ref{thm:AlgMakesBases}, we know $\gamma(\mathcal{B}(\mathcal{P}_n))$ produces the set of bases of $\mathcal{P}_{n+1}.$ It is left to show that $\gamma$ also induces the external order on $\mathcal{B}(\mathcal{P}_{n+1}),$ thus producing $\mbox{Ex}(\mathcal{P}_{n+1}).$
Let $A,B\in\mathcal{B}(\mathcal{P}_n).$ We prove this by cases for each step of the algorithm, confirming that the condition $A \subseteq B\cup \mbox{Ext}(B)$ in Lemma \ref{lem:lasvergnas_def} is satisfied. Denote $\hat{A}=\gamma(A)$ for any $A\in\mathcal{B}(\mathcal{P}_n).$ 
For the following cases, $\hat{A}$ denotes the basis produced by $\gamma$ after performing only the first step of the algorithm. Since every basis in $\mathcal{B}(\mathcal{P}_n)$ contains either 1 or 2, we need consider only the following three cases.

\textbf{Case 1:} $1\in A$ and $1\in B$.\\
Since $1\in A$, the only circuits contained in $A\cup\{i\}$ for $i\in[2n]$ are those of the form $(i,j)$ for $i,j\in[n+1,\dots,2n]$. Thus $\mbox{Ext}(A)=\{n+1,\dots,max(A)-1\}$. Similarly, $\mbox{Ext}(B)=\{n+1,\dots,max(B)-1\}$. After applying step one of $\gamma$, $1\in\hat{A}$ and $1\in\hat{B}$. Thus $\mbox{Ext}(\hat{A})=\{n+2,\dots,max(\hat{A})-1\}$ and $\mbox{Ext}(\hat{B})=\{n+2,\dots,max(\hat{B})-1\}$. Let $a\in \hat{A}\backslash \{1\}$. Then $a = a_i+1$ for $a_i \in A$. Since  $A\leq_{\mbox{Ext}}B$, it follows that $a_i \in B \cup \{n+1,\dots,max(B)-1\}$. Thus, $a_i+1\in B+1\cup \{n+2,\dots,max(B)\}$. Note that $max(\hat{B})-1 = max(B)$. Therefore, $a\in \hat{B}\cup Ext(\hat{B})$.

\textbf{Case 2:} $1\in A$ and $1\notin B$.\\ 
By definition, if $1\notin B$ then $1\in \mbox{Ext}(B)$ for every $B$ of a trivial UIP. Let $a\in \hat{A}\backslash \{1\}$. Then $a = a_i+1$ for $a_i \in A$. We know $a_i \in B \cup \{1,n+1,\dots,max(B)-1\}$. Thus, $a_i+1\in B+1\cup \{1,n+2,\dots,max(B)\}$. Note that $max(\hat{B})-1 = max(B)$. Therefore, $a\in \hat{B}\cup Ext(\hat{B})$.

\textbf{Case 3:} $1 \notin A \mbox{ or } B$.\\
By assumption we know that $1\in \mbox{Ext}(A)$ and $1\in \mbox{Ext}(B)$. Then for $a\in \hat{A}\backslash \{2\}$, we have that $a = a_i+1$ for $a_i \in A$. Since $a_i \in B \cup \{1,n+1,\dots,max(B)-1\}$, it follows $a_i+1\in B+1\cup \{1,n+2,\dots,max(B)\}$. Note that $max(\hat{B})-1 = max(B)$. Therefore, $a\in \hat{B}\cup Ext(\hat{B})$.

For the second step of the algorithm, notice that the only change to a basis $B$ is that the element $2n+1$ is replaced with $2(n+1)$. This means that $\mbox{Ext}(\hat{B})=\mbox{Ext}(B)\cup\{2n+1\}$. Then the cases to check are exactly those done for step one of the algorithm, and proceed in the same way. Similarly, for the third step, $\mbox{Ext}(\hat{B})=\mbox{Ext}(B)\backslash\{1\}$ since this step replaces the element 2 with 1. And then the cases proceed as above.

In all cases, the external order of $\mathcal{B}(\mathcal{P}_{n+1})$ is shown to be induced by the external order of $\mathcal{B}(\mathcal{P}_n)$. Thus, $\gamma(\mathcal{B}(\mathcal{P}_n))=\mbox{Ex}(\mathcal{P}_{n+1})$ as desired.
\end{proof}

\section{Conclusion and Future Work}\label{}

The recursive algorithm for the external poset on the bases of trivial UIPs provides a stepping stone to explore and develop either general or specific algorithms for the other UIPs. Recall that these are Catalan objects and that we have Catalan many to consider. In Figure \ref{fig:all rank 3} we see the trivial UIP and the remaining UIPs of rank $r=3$. There appears to be some symmetry in the Hasse diagrams, but it is unclear how these five UIPs give rise to the non-trivial UIPs of rank $r=4$. See Figure \ref{fig:soup} for examples of externally ordered posets for non-trivial UIPs of rank $r=4$. As was done for the recursion for the externally ordered poset for trivial UIPs, a potential first step is to find nice descriptions for the bases and circuits of the remaining UIPs.

Another approach for describing Catalan recursion for the remaining externally ordered posets is to consider matroid minors. One can check if minors of rank $n+1$ UIPs are isomorphic to rank $n$ UIPs. We have begun to explore this direction and our code is provided below. 

 Our algorithm provides a new way of producing externally ordered posets. It would be interesting to know how the complexity of Las Vergnas' procedure compares to ours. In particular, how efficient is our algorithm in generating the next rank trivial UIP.

\section{Acknowledgements}\label{}
This project would not be possible without the support and encouragement of various people I've been fortunate enough to meet and work with at UC Davis. I would like to thank Dr. Anastasia Chavez for her patience and guidance throughout this project. Thank you for giving me the opportunity to explore mathematics research, and for introducing me to matroid theory. I am also grateful to Dr. Jesus De Loera for supporting this project.
I am grateful to the McNair scholars program for introducing me to undergraduate research, Esha Datta for so many helpful conversations and introductions, Shannon Allen for listening to me talk about this project many, many times, Calvin Cramer for being my Python help desk, and the friends, family, faculty and staff that saw this project in various stages of completion. 

\begin{figure}[h]\label{fig:uip r=3}
    \centering
    \includegraphics[width=1\linewidth]{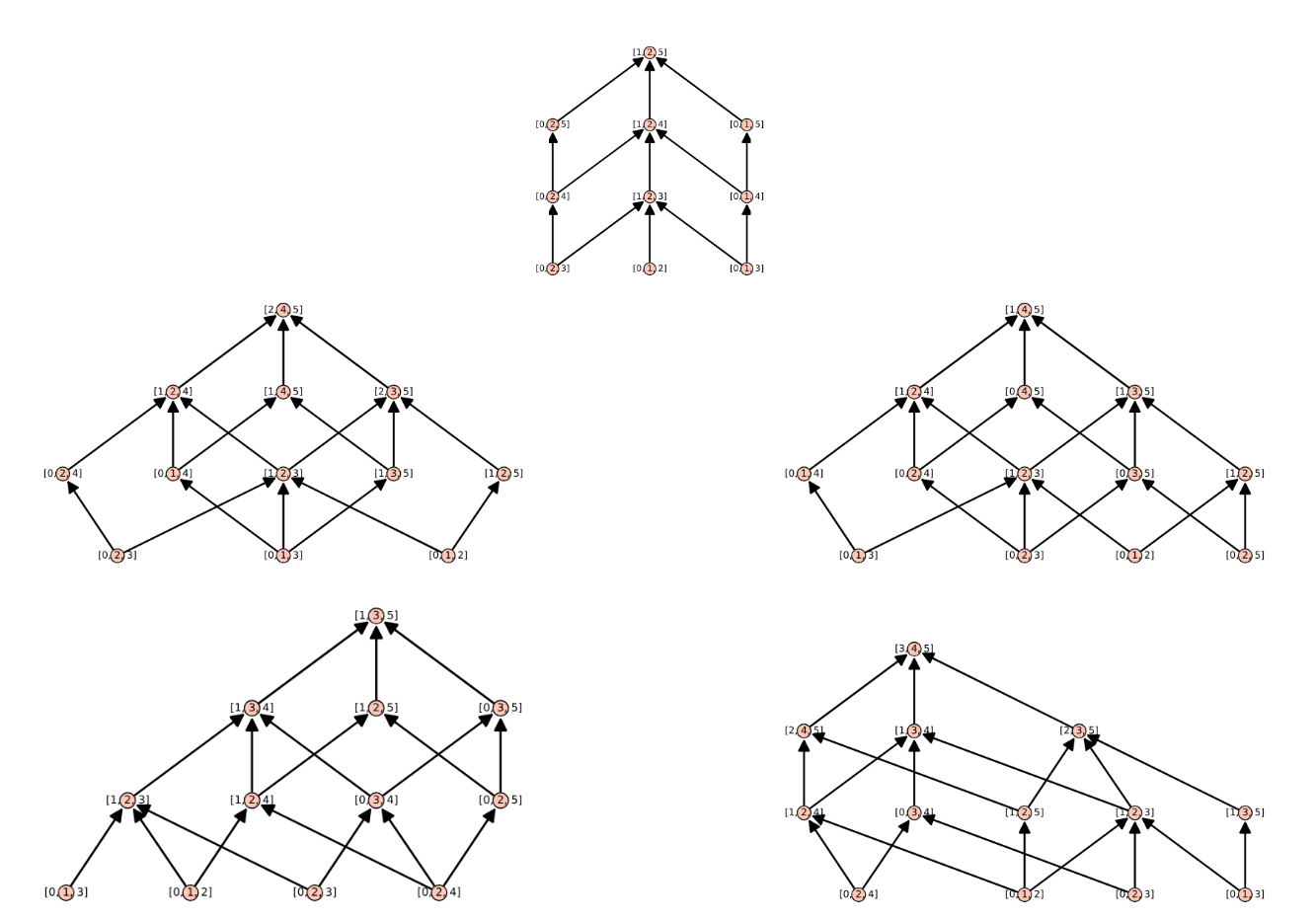}
    \caption{All UIPs with rank $r=3$.}
    \label{fig:all rank 3}
\end{figure}

\begin{figure}[h]
    \centering
    \includegraphics[width=1\linewidth]{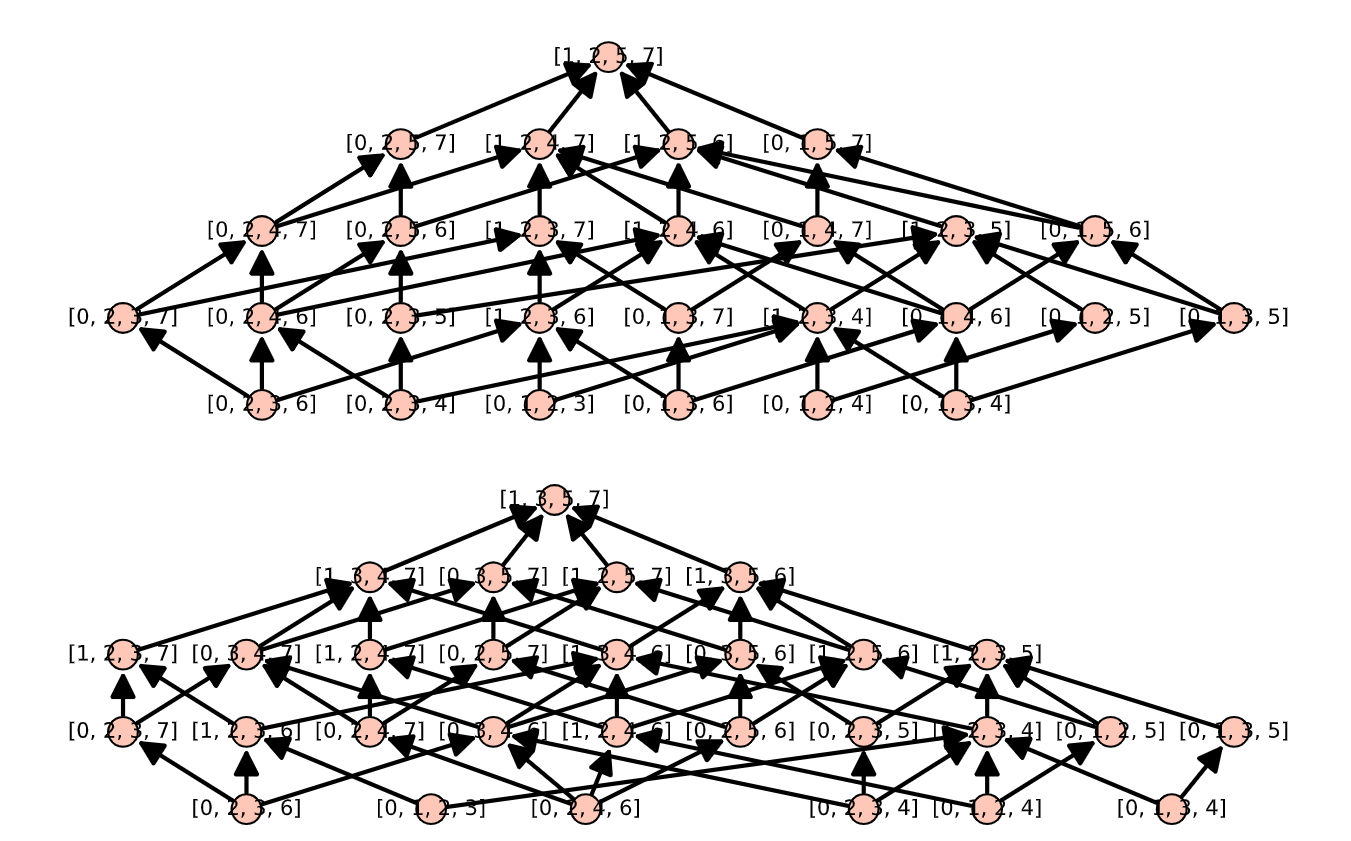}
    \caption{Two of the 14 UIPs with rank $r=4$.}
    \label{fig:soup}
\end{figure}
\pagebreak
\begin{lstlisting}[language=Python]
import sage.matroids.matroid
def UnitIntervalPositroid(Dyck_matrix):

    D = Dyck_matrix
    rank = D.nrows()
    D_flip = matrix(rank)
    for i in range(rank):
        D_flip[rank-i-1] = (-1)^i*D[i]
    A = matrix.identity(rank).augment(D_flip)
    cols = A.ncols()
    M_A = Matroid(A)
    
    return M_A

def powerset(fullset):
  listrep = list(fullset)
  n = len(listrep)
  return [[listrep[k] for k in range(n) if i&1<<k] for i in range(2**n)]
  
Pos_list3 = []
for d in D_matrices:
    U = UnitIntervalPositroid(d)
    Pos_list3.append(U)
print Pos_list3

E = (Matrix([[1,1,1,0],[1,1,1,0],[1,1,1,0],[1,1,1,1]]))
    
U4 = UnitIntervalPositroid(E)

contU4 = []
for s in [0,1,2,3,4,5]:
    for t in [0,1,2,3,4,5]:
        if t==s: break
        else:
            V = U4.delete(t).contract(s)
            contU4.append([V])

for p in contU4:
    for m in Pos_list3:
        if p.is_isomorphic(m)==True:
            print [b for b in m.bases()],"yes","\n"
            print 'bases of cont-del positroid are ', [b for b in p.bases()],"\n"

\end{lstlisting} 

\bibliographystyle{abbrv}
\bibliography{SeniorThesis}
\label{sec:biblio}



\end{document}